\documentclass[11pt]{amsart}
\usepackage{graphicx,amscd,color,amsmath,amsfonts,amssymb,amsthm,centernot,hyperref}
\usepackage[initials]{amsrefs}
\usepackage[all]{xy}

\newtheorem{theorem}{Theorem}

\newtheorem{corollary}{Corollary}
\newtheorem{definition}{Definition}

\newtheorem{lemma}{Lemma}
\newtheorem{proposition}{Proposition}

\begin{document}
\title[A general lineability criterion]{A general lineability criterion for complements of vector spaces}

\author[Ara\'{u}jo]{G. Ara\'{u}jo}
\address[G. Ara\'{u}jo]{\mbox{}\newline \indent Departamento de Matem\'{a}tica \newline\indent Universidade Estadual da Para\'{\i}ba \newline\indent 58.429-500 Campina Grande, Brazil.}
\email{gustavoaraujo@servidor.uepb.edu.br}

\author[Barbosa]{A. Barbosa}
\address[A. Barbosa]{\mbox{}\newline \indent Departamento de Matemática \newline \indent Universidade Federal da Paraíba \newline\indent 58.051-900 João Pessoa, Brazil.}
\email{afsb@academico.ufpb.br}

\begin{abstract}
In 1931, Banach proved that, far from being exceptional objects, the Weierstrass functions form a residual set in the space $\mathcal{C}[0,1]$ of continuous functions. Later on, in 1966, V. I. Gurariy showed that, except for zero, there is an infinite-dimensional linear subspace of Weierstrass functions. This was the first example of \textit{lineability}. Over the last decade, this topic has attracted the continuous attention of the mathematical community, with a steady stream of papers being published, many of them in highly ranked mathematical journals. Several lineability criteria are known and applied to specific topological vector spaces. To paraphrase  L. Bernal-Gonz\'alez and M. O. Cabrera in [J. Funct. Anal. \textbf{266} (2014), 3997-4025], ``sometimes, such criteria furnish unified proofs of a number of scattered results in the related literature''. In this article, we provide a general lineability criterion in the context of complements of vector spaces.
\end{abstract}

\keywords{Lineability, spaceability, complements of vector spaces}
\subjclass{15A03, 46B87}

\thanks{The first author was supported by Grant 3024/2021, Para\'iba State Research Foundation (FAPESQ). The second author was supported by CAPES;}

\thanks{\textbf{Conflict of interest.} The authors declare that they have no conflict of interest.}

\maketitle


\section{Preliminaries and background}\label{sec1}

From now on all vector spaces are considered over a fixed scalar field $\mathbb{K}$ which can be either $\mathbb{R}$ or $\mathbb{C}$. For any set $X$ we shall denote by $\mathrm{card}(X)$ the cardinality of $X$; we also define $\mathfrak{c} = \mathrm{card}(\mathbb{R})$ and $\aleph_0 = \mathrm{card} (\mathbb{N})$. If $X$ is a vector space and $Y$ is a vector subspace of $X$, we denote the infinite algebraic codimension of $Y$ by $\mathrm{codim}(Y)$.

Recall that if $E$ is a vector space, $\beta\leq\dim(E)$ is a cardinal number and $A \subset E$, then $A$ is said to be:
\begin{enumerate}
\item[$\bullet$] {\it lineable} if there is an infinite dimensional vector space $F$ such that $F \smallsetminus \{0\} \subset A$,
\item[$\bullet$] {\it $\beta$-lineable} if there exists a vector space $F_\beta$ with $\dim(F_\beta) = \beta$ and $F_\beta \smallsetminus \{0\} \subset A$ (hence lineability means $\aleph_0$-lineability),
\item[$\bullet$] {\it maximal lineable} in $E$ if $A$ is $\dim(E)$-lineable, and
\item[$\bullet$] {\it pointwise $\beta$-lineable} if for each $x\in A$ there is a subspace $F_{\beta,x}$ such that $x \in F_{\beta,x}\subset A\cup \{0\}$ and $\dim(F_{\beta,x}) = \beta$.
\end{enumerate}
Also, if $\alpha$ is another cardinal number, with $\alpha<\beta$,  then $A$ is said to be:
\begin{enumerate}
\item[$\bullet$] {\it $(\alpha,\beta)$-lineable} if it is $\alpha$-lineable and for every subspace $F_\alpha\subset E$ with $F_\alpha\subset A\cup\{0\}$ and $\dim(F_\alpha)=\alpha$, there is subspace $F_\beta\subset E$ with $\dim (F_\beta)=\beta$ and
\begin{equation}\label{ab}
F_\alpha\subset F_\beta \subset A\cup \{0\}
\end{equation}
(hence $(0,\beta)$-lineability $\Leftrightarrow \ \beta$-lineability).
\end{enumerate}

The classical concept of \textit{lineability} (the first three points above) was coined by V. I. Gurariy in the early 2000's and it first appeared in print in \cite{GQ,AGS,SS}. V. I. Gurariy's interest in linear structures in generally non-linear settings dates as far back as 1966 (see \cite{gurariy1966}). The study of large vector structures in sets of real and complex functions has attracted many mathematicians in the last decade. For example, in \cite[Theorem 4.3]{AGS} (see also \cite{10}), the authors prove that the set of everywhere surjective functions on $\mathbb{R}$ is $2^{\mathfrak{c}}$-lineable. This result has been further improved in \cite{GMSS}, where the authors prove that the set of strongly everywhere surjective functions and the set of perfectly everywhere surjective functions are both $2^{\mathfrak{c}}$-lineable as well. In the same paper, the authors show that the set of nowhere monotone everywhere differentiable functions on $\mathbb{R}$ is $\mathfrak{c}$-lineable (see also \cite{2}).

Situations like those described above often occur, that is, in a vector space, lineability results for one of its subsets with certain apparently rare properties are generally positive. In fact, these sets often contain a large-dimensional vector space. In this sense, more restrictive variations of the concept of lineability (and its variants) were created in order to obtain more non-lineability results.

The notion of pointwise lineability was introduced in \cite{PR} and the notion of $(\alpha,\beta)$-lineability was introduced in \cite{FPT} (see \cite{DR,DFPR,FPR}). Initially, our aim in this study was to obtain necessary (and possibly sufficient) conditions to achieve results of non-$(\alpha,\beta)$-lineability in the context of complements of vector spaces. Contrary to our initial objective, what we achieved was, in fact, a very general $(\alpha,\beta)$-lineability criterion (and therefore also a criterion of lineability) in the context of complements of vector spaces. As a consequence of our main result (Theorem \ref{T1}), we can verify, for instance, that $E\smallsetminus F$ is always $\beta$-lineable, where $E$ is any non-trivial vector space, $F$ is a proper subspace of $E$ and $\beta=\mathrm{codim}(F)$. That is, in $E\smallsetminus F$, the concepts of $\beta$-lineability, $(\alpha,\beta)$-lineability and pointwise $\beta$-lineability are actually the same.

The paper is organized as follows. In the second section we present our first main result, a very generic criterion of $(\alpha,\beta)$-lienability in the context of differences in vector spaces, and as a consequence, we present several corollaries that can be applied in the most varied contexts. In Section \ref{sec3}, through the study of the set
\begin{equation}\label{AAA}
L_p[0,1]\smallsetminus\bigcup_{q>p}L_q[0,1], \ \ \ p>0,
\end{equation}
we recall that the main theorem of Section \ref{sec2} cannot be improved, in general, for the context of \textit{spaceability} (this definition will be presented in Section \ref{sec3}), because a result by Fávaro et al. \cite{fprs} ensures that the set in \eqref{AAA} is not $(\alpha,\beta)$-spaceable for $\alpha\geq\aleph_0$, regardless of the cardinal number $\beta$. In this section, we give (few and weak) sufficient conditions to achieve general non-$(\alpha,\beta)$-spaceability criteria. In Section \ref{sec4}, as a matter of curiosity, we will detail how to retrieve and generalize, as a consequence of our main results (Theorems \ref{T1}, \ref{T6} and \ref{C7}) and the corollaries derived from it (Corollaries \ref{1}, \ref{S1}, \ref{3} and \ref{JKHSKJAHK}), some lineability/spaceability classical results.

\section{Main result and some consequences}\label{sec2}

Only a few (and very recent) non-constructive techniques strategy to obtain lineability are known. The main ones are due to Aron et al. \cite{agps2009}, Bernal-Gonz\'alez et al. \cite{bc}, and G\'amez et al. \cite{gamezmunozseoane2010,LAA2014}). These techniques (although they represent major breakthroughs in the theory) unfortunately only apply to certain particular frameworks. They are, actually, far from being useful in more general and abstract contexts.

Here we present another very general criterion of lineability.

\begin{theorem}\label{T1}
Let $E$ be a vector space and $F$ be a proper subspace of $E$. Then $E\smallsetminus F$ is $(\alpha,\mathrm{codim}(F))$-lineable for all $\alpha < \mathrm{codim}(F)$.
\end{theorem}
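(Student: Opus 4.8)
The plan is to translate the set-membership condition into a transversality condition and then invoke the universal property of the quotient. First I would record the elementary observation that a subspace $G\subseteq E$ satisfies $G\subset(E\smallsetminus F)\cup\{0\}$ if and only if $G\cap F=\{0\}$: a nonzero vector of $G$ lies in $E\smallsetminus F$ precisely when it fails to lie in $F$. Writing $\pi\colon E\to E/F$ for the canonical projection and $\beta:=\mathrm{codim}(F)=\dim(E/F)$, this says exactly that $\pi|_{G}$ is injective, whence $\dim G=\dim\pi(G)\le\dim(E/F)=\beta$. Thus $\beta$ is the largest possible dimension of a subspace sitting inside $(E\smallsetminus F)\cup\{0\}$, and that maximal value is attained precisely by the algebraic complements of $F$. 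The whole theorem therefore reduces to a single statement: every subspace $F_\alpha$ with $F_\alpha\cap F=\{0\}$ can be enlarged to an algebraic complement of $F$.

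The $\alpha$-lineability clause is then immediate. Since $F$ is a proper subspace, a Zorn's-lemma argument (equivalently, extending a basis of $F$ to a basis of $E$) produces a complement $G_0$ with $E=F\oplus G_0$ and $\dim G_0=\beta>\alpha$; any $\alpha$-dimensional subspace of $G_0$ meets $F$ trivially and hence lies in $(E\smallsetminus F)\cup\{0\}$.

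For the extension step --- the heart of the argument --- I would fix $F_\alpha$ with $F_\alpha\cap F=\{0\}$ and $\dim F_\alpha=\alpha$, so that $\pi|_{F_\alpha}$ is an isomorphism onto the $\alpha$-dimensional subspace $V:=\pi(F_\alpha)\subseteq E/F$. Let $\sigma_0:=(\pi|_{F_\alpha})^{-1}\colon V\to E$; it is a linear section of $\pi$ over $V$ with image $F_\alpha$. I would then extend $\sigma_0$ to a linear section $\sigma\colon E/F\to E$ of $\pi$ by choosing a basis of $V$, completing it to a basis of $E/F$, and sending each new basis vector to an arbitrary preimage under $\pi$ (this is where a choice principle enters). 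Setting $F_\beta:=\sigma(E/F)$ gives $F_\alpha=\sigma_0(V)=\sigma(V)\subseteq F_\beta$, while $\pi\circ\sigma=\mathrm{id}_{E/F}$ forces $\sigma$ to be injective and $\pi|_{F_\beta}$ to be a bijection onto $E/F$. Consequently $F_\beta$ is a complement of $F$, so $F_\beta\cap F=\{0\}$ (that is, $F_\beta\subset(E\smallsetminus F)\cup\{0\}$) and $\dim F_\beta=\dim(E/F)=\beta$, exactly as required.

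I expect no genuine obstacle: the statement is purely linear-algebraic and carries no topological or measure-theoretic content. The one non-elementary ingredient is the extension of the partial section $\sigma_0$ to a full section $\sigma$, which relies on the axiom of choice in the guise of basis completion. It is worth emphasizing that no cardinal arithmetic is needed to pin down $\dim F_\beta$: because $F_\beta$ is forced to be isomorphic to $E/F$ via $\pi$, its dimension equals $\beta$ automatically, irrespective of how $\alpha$ compares with $\beta$. The bulk of the work is simply the reduction in the first paragraph; once transversality is reformulated through the quotient map, everything else is routine.
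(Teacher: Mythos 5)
Your proposal is correct and follows essentially the same route as the paper: both arguments reduce the problem to extending a subspace $F_\alpha$ meeting $F$ trivially to a full algebraic complement of $F$ via basis completion, and both identify that complement with $E/F$ through the canonical projection to compute its dimension. The only cosmetic difference is that you complete a basis downstairs in $E/F$ and lift it by a section, whereas the paper completes the linearly independent set $\mathcal{A}\cup\mathcal{B}$ to a basis of $E$ directly.
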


\begin{proof}
Let $\mathcal{A}\subset E$ be a linearly independent subset such that $\mathrm{span}(\mathcal{A})\cap F=\{0\}$ and $\mathrm{card}(\mathcal{A})=\alpha$ and let $\mathcal{B}$ be a base of $F$.

Let us prove that $\mathcal{A}\cup\mathcal{B}$ is a linearly independent set. In fact, given $u_1,\ldots,u_n\in \mathcal{A}$, $v_1,\ldots,v_m\in\mathcal{B}$ and $a_1,\ldots,a_n,b_1,\ldots,b_m\in\mathbb{K}$ such that
\[
\sum_{i=1}^{n}a_iu_i+\sum_{j=1}^{m}b_jv_j=0,
\]
it follows that
\[
\mathrm{span}(\mathcal{A})\ni\sum_{i=1}^{n}a_iu_i=-\sum_{j=1}^{m}b_jv_j\in F.
\]
Since $\mathrm{span}(\mathcal{A})\cap F=\{0\}$ and $\mathcal{A}$ and $\mathcal{B}$ are linearly independent, we conclude that $a_i=b_j=0$, $i=1,\ldots,n$, $j=1,\ldots, m$.

Since every linearly independent set of a vector space can be extended to a basis, we can consider $\mathcal{C}\subset E$ so that $\mathcal{A}\cup\mathcal{B}\cup \mathcal{C }$ is a base of $E$. Therefore, if $u\in\mathrm{span}(\mathcal{A}\cup\mathcal{C})\cap F$, there are $u_1,\ldots,u_n\in\mathcal{A}\cup\mathcal{C}$, $v_1,\ldots,v_m\in\mathcal{B}$ and $a_1,\ldots,a_n,b_1,\ldots,b_m\in\mathbb{K}$ so that
\[
\sum_{i=1}^{n}a_iu_i=u=\sum_{j=1}^{m}b_jv_j,
\]
i.e.,
\[
\sum_{i=1}^{n}a_iu_i+\sum_{j=1}^{m}-b_jv_j=0.
\]
Due to the linear independence of $\mathcal{A}\cup \mathcal{B}\cup \mathcal{C}$ we get $a_i=b_j=0$, $i=1,\ldots,n$, $j=1,\ldots,m$, and thus $u=0$. This shows that
\[
\mathrm{span}(\mathcal{A}\cup\mathcal{C})\cap F=\{0\} \ \ \ \text{and} \ \ \ \mathrm{span}(\mathcal{A})\subset\mathrm{span}(\mathcal{A}\cup \mathcal{C})
\]

Let us now see that the restriction of the canonical projection $E \ni x \stackrel{\pi}{\longmapsto} \overline{x}\in \frac{E}{F}$ to the subspace $V=\mathrm{span}(\mathcal{A}\cup\mathcal{C})$ is a linear isomorphism. In fact, given $\bar{x}\in\frac{E}{F}$, there are $u_x\in V$ and $v_x\in F$ such that $x=u_x+v_x$ and
\[
\pi(u_x) = \overline{u_x} = \overline{u_x}+\overline{0} = \overline{u_x}+\overline{v_x} = \overline{u_x+v_x}=\overline{x},
\]
i.e., $\pi|_V$ is surjective. Now, let $x,y\in V$ such that $\pi(x)=\pi(y)$. Since $\overline{x-y}=\overline{x}-\overline{y}=\overline{0}$, we conclude that $x-y\in F$. As we also have $x-y\in V$ and $V\cap F=\{0\}$, it follows that $x-y=0$, which shows the injectivity of $\pi|_V$.

Since $V$ and $\frac{E}{F}$ are isomorphic, we can conclude that $\dim (V)=\mathrm{codim}(F)$. By construction we also have $\mathrm{span}(\mathcal{A})\subset V\subset (E\smallsetminus F)\cup{0}$, and the result is proved.
\end{proof}  

The next results are immediate consequences of the previous theorem.

\begin{corollary}\label{1}
Let $E$ be a vector space and $F$ be a proper subspace of $E$. Then $E\smallsetminus F$ is $\mathrm{codim}(F)$-lineable.
\end{corollary}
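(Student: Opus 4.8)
The plan is to deduce this directly from Theorem \ref{T1} by specializing the parameter $\alpha$ to $0$. Recall from the definitions that $(0,\beta)$-lineability is, by the stated convention, the same as $\beta$-lineability; hence it suffices to show that $E\smallsetminus F$ is $(0,\mathrm{codim}(F))$-lineable, and the whole argument will be a single application of the preceding theorem.

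To invoke Theorem \ref{T1} with $\alpha=0$, the one thing I would check is that the hypothesis $\alpha<\mathrm{codim}(F)$ is satisfied in this instance. Since $F$ is a \emph{proper} subspace of $E$, the quotient $E/F$ is non-trivial, and therefore $\mathrm{codim}(F)=\dim(E/F)\geq 1>0=\alpha$. Thus $\alpha=0$ is an admissible choice, and this is the only place where the properness assumption on $F$ is actually used.

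With the hypothesis confirmed, Theorem \ref{T1} yields that $E\smallsetminus F$ is $(0,\mathrm{codim}(F))$-lineable, and applying the equivalence $(0,\beta)$-lineability $\Leftrightarrow\beta$-lineability with $\beta=\mathrm{codim}(F)$ delivers exactly the asserted $\mathrm{codim}(F)$-lineability. I do not anticipate any genuine obstacle here: all of the substantive work (the construction of the independent set $\mathcal{A}\cup\mathcal{B}\cup\mathcal{C}$ and the isomorphism between $V=\mathrm{span}(\mathcal{A}\cup\mathcal{C})$ and $E/F$) is already carried out in the proof of Theorem \ref{T1}, so the corollary is immediate once the trivial inequality $0<\mathrm{codim}(F)$ is noted.
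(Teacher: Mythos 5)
Your proof is correct and matches the paper's intent exactly: the paper states the corollary as an immediate consequence of Theorem \ref{T1}, which amounts to the specialization $\alpha=0$ together with the stated convention that $(0,\beta)$-lineability is the same as $\beta$-lineability. Your additional check that properness of $F$ gives $\mathrm{codim}(F)\geq 1>0$ is the right (and only) hypothesis verification needed.
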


\begin{corollary}\label{S1}
Let $E$ be a vector space of dimension $\beta$ and $F$ be a proper subspace of $E$. If $E\smallsetminus F$ is $\beta$-lineable, then $E\smallsetminus F$ is $(\alpha,\beta)$-lineable for all $\alpha < \beta$.
\end{corollary}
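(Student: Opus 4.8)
The plan is to reduce the statement to a direct application of Theorem \ref{T1}. The crux is to show that the hypothesis of $\beta$-lineability, combined with $\beta = \dim(E)$, forces $\mathrm{codim}(F) = \beta$; once this identity is established, Theorem \ref{T1} immediately yields $(\alpha,\beta)$-lineability for every $\alpha < \beta = \mathrm{codim}(F)$.

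First I would unwind the hypothesis. Since $E\smallsetminus F$ is $\beta$-lineable, there is a subspace $V \subset E$ with $\dim(V) = \beta$ and $V\smallsetminus\{0\} \subset E\smallsetminus F$, i.e. $V \subset (E\smallsetminus F)\cup\{0\}$. The key observation is that any such $V$ meets $F$ only at the origin: every nonzero vector of $V$ lies in $E\smallsetminus F$ and is therefore not in $F$, so $V\cap F = \{0\}$.

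Next I would exploit $V\cap F = \{0\}$ exactly as in the proof of Theorem \ref{T1}: the restriction to $V$ of the canonical projection $\pi\colon E \to E/F$ is injective, so $V$ embeds linearly into $E/F$, whence $\beta = \dim(V) \leq \dim(E/F) = \mathrm{codim}(F)$. On the other hand, since $F$ is a subspace of $E$, we always have $\mathrm{codim}(F) = \dim(E/F) \leq \dim(E) = \beta$. Combining the two inequalities gives $\mathrm{codim}(F) = \beta$.

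Finally, with $\mathrm{codim}(F) = \beta$ in hand, the conclusion is immediate from Theorem \ref{T1}: $E\smallsetminus F$ is $(\alpha,\mathrm{codim}(F))$-lineable for all $\alpha < \mathrm{codim}(F)$, that is, $(\alpha,\beta)$-lineable for all $\alpha < \beta$. I expect the only step requiring genuine care to be the identity $\mathrm{codim}(F) = \beta$; the remainder is bookkeeping, and no delicate cardinal arithmetic is needed beyond the monotonicity $\dim(E/F) \leq \dim(E)$.
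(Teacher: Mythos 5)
Your proposal is correct and follows essentially the same route as the paper: both arguments deduce $\beta \leq \dim(E/F)$ from the $\beta$-lineability hypothesis, combine this with $\dim(E/F) \leq \dim(E) = \beta$ to get $\mathrm{codim}(F) = \beta$, and then invoke Theorem \ref{T1}. You merely spell out the first inequality (via $V \cap F = \{0\}$ and the injectivity of the quotient map on $V$) in more detail than the paper does.
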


\begin{proof}
Since $E\smallsetminus F$ is $\beta$-lineable, it follows that $\beta\leq \dim \left(\frac{E}{F}\right)$.
Thus $\dim \left(\frac{E}{F}\right)=\beta$. From Theorem \ref{T1} we conclude that $E\smallsetminus F$ is $(\alpha,\beta)$-lineable.
\end{proof}

It is not difficult to verify that
\[
\text{pointwise }\beta\text{-lineability }\Longrightarrow \ (1,\beta)\text{-lineability},
\]
and that the reciprocal is not necessarily true. As a consequence of our main theorem (Theorem \ref{T1}), we can verify that, in the context of the complement of vector spaces, these two notions coincide. In fact, given $x\in E\smallsetminus F$, we have $\mathrm{span}(\{x\})\subset (E\smallsetminus F)\cup\{0\}$ and therefore, by Theorem \ref{T1}, there is a subspace $V$ of $E$, with $\dim(V)=\beta$, such that $\mathrm{span}(\{x\})\subset V\subset (E\smallsetminus F)\cup\{0\}$. In particular $x\in V$.

Therefore, the following result is valid:

\begin{corollary}\label{3}
Let $E$ a vector space and $F$ a proper subspace of $E$. Then $E\smallsetminus F$ is pointwise $\mathrm{codim}(F)$-lineable.
\end{corollary}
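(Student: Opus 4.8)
The plan is to read this off directly from Theorem \ref{T1}, since pointwise lineability is exactly the statement one obtains by applying $(\alpha,\beta)$-lineability with $\alpha=1$ to the line spanned by an arbitrary point. Write $\beta=\mathrm{codim}(F)$ and fix an arbitrary $x\in E\smallsetminus F$; the goal is to produce a subspace $F_{\beta,x}$ with $x\in F_{\beta,x}\subset(E\smallsetminus F)\cup\{0\}$ and $\dim(F_{\beta,x})=\beta$.

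First I would verify that the line through $x$ already avoids $F$ except at the origin, that is, $\mathrm{span}(\{x\})\subset(E\smallsetminus F)\cup\{0\}$. This is immediate: if $\lambda x\in F$ for some scalar $\lambda\neq 0$, then $x=\lambda^{-1}(\lambda x)\in F$ because $F$ is a subspace, contradicting $x\notin F$. Hence $\mathrm{span}(\{x\})$ is a one-dimensional subspace contained in $(E\smallsetminus F)\cup\{0\}$, and it serves as the space $F_\alpha$ with $\alpha=1$ in the definition of $(\alpha,\beta)$-lineability.

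Assuming $\beta>1$, I would then invoke Theorem \ref{T1} with $\alpha=1<\beta$: it guarantees a subspace $F_\beta$ with $\dim(F_\beta)=\beta$ and $\mathrm{span}(\{x\})\subset F_\beta\subset(E\smallsetminus F)\cup\{0\}$. Setting $F_{\beta,x}=F_\beta$, we get $x\in\mathrm{span}(\{x\})\subset F_{\beta,x}$ together with the required dimension, so the pointwise condition holds at $x$; since $x$ was arbitrary, $E\smallsetminus F$ is pointwise $\beta$-lineable.

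The result is essentially immediate, so there is no real obstacle; the only point deserving separate attention is the degenerate case $\beta=\mathrm{codim}(F)=1$, where Theorem \ref{T1} does not apply because it requires $\alpha<\beta$. Here the argument is even simpler: the subspace $\mathrm{span}(\{x\})$ itself has dimension $\beta=1$, contains $x$, and lies in $(E\smallsetminus F)\cup\{0\}$ by the verification above, so one takes $F_{\beta,x}=\mathrm{span}(\{x\})$. In either case the witness subspace is produced, which completes the proof.
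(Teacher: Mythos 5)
Your proposal is correct and follows exactly the paper's own argument: take $\mathrm{span}(\{x\})$ as the one-dimensional subspace inside $(E\smallsetminus F)\cup\{0\}$ and apply Theorem \ref{T1} with $\alpha=1$ to enlarge it to a subspace of dimension $\mathrm{codim}(F)$ containing $x$. Your separate treatment of the degenerate case $\mathrm{codim}(F)=1$ is a small extra care the paper omits, but it does not change the substance of the argument.
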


\section{What can we say about spaceability and $(\alpha,\beta)$-spaceability?}\label{sec3}

In the definition of lineability given in Section \ref{sec1}, if $E$ is, in addition, a topological vector space, then $A$ is called {\it $\beta$-spaceable} if $A\cup\{0\}$ contains a closed $\beta$-dimensional linear subspace of $E$, and {\it maximal spaceable} in $E$ if $A$ is $\dim(E)$-spaceable (see \cite{AGS}). Moreover, if the subspace $F_\beta$ satisfying \eqref{ab} can always be chosen closed, we say that $A$ is $(\alpha,\beta)$-spaceable (see \cite{FPT}).

Before we comment on this question, recall that, for $p>0$, the $L_p[0,1]$ space is the quotient of the space
\begin{align*}
& \mathcal{L}_p[0,1]\\
& =\left\{f:[0,1]\rightarrow\mathbb{K}:f\text{ is measurable and} \int_{[0,1]}|f(x)|^p \mu(x)<\infty\right\}
\end{align*}
by the equivalence relation
\[
f\sim g \Leftrightarrow \mu(\{x\in[0,1]:\ f(x)\neq g(x)\})=0,
\]
equipped with the norm ($p$-norm if $p<1$)
\begin{eqnarray*}
\|f\|_p=\left(\int_{[0,1]}|f(x)|^p \mu(x)\right)^\frac{1}{p}.
\end{eqnarray*}

In Botelho et al. \cite{BFPS} it was proved that
\begin{equation}\label{lp-lq}
L_p[0,1]\smallsetminus\bigcup_{q>p}L_q[0,1]
\end{equation}
is spaceable for all $p>0$, but the proof does not assure that $L_p[0, 1]\smallsetminus\cup_{q>p}L_q[0, 1]$ is $(\alpha,\mathfrak{c})$-spaceable for some cardinal $\alpha>0$.

A result by V. V. Fávaro et al. \cite{FPT} shows that this is true for $\alpha = 1$ and in the same article they ask about the $(\alpha,\mathfrak{c})$-spaceability of the set in \eqref{lp-lq} for a cardinal $1 <\alpha < \mathfrak{c}$ (this same issue is again highlighted in \cite{FPR}). Later, in \cite[Corollary 2.4]{fprs}, V. V. Fávaro et al. proved that the set in \eqref{lp-lq} is not $(\alpha,\beta)$-spaceable for $\alpha\geq\aleph_0$, regardless of the cardinal number $\beta$. Therefore the question remains open only for $2\leq \alpha<\aleph_0$.

A direct consequence of our main result shows us a different picture when we deal only with the lineability.

\begin{corollary}
Let $p>0$. The set
\[
L_p[0,1]\smallsetminus\bigcup_{q>p}L_q[0,1]
\]
is $(\alpha,\mathfrak{c})$-lineable for all $\alpha<\mathfrak{c}$.
\end{corollary}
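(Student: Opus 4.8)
The plan is to recognize the corollary as a direct instance of Theorem~\ref{T1}. I would set $E = L_p[0,1]$ and $F = \bigcup_{q>p} L_q[0,1]$. The first thing to check is that $F$ is genuinely a linear subspace of $E$: because $[0,1]$ has finite measure, $L_q[0,1] \subset L_{q'}[0,1]$ whenever $q > q' > p$, so given $f \in L_{q_1}[0,1]$ and $g \in L_{q_2}[0,1]$ with $q_1,q_2 > p$, both $f$ and $g$ lie in $L_{\min(q_1,q_2)}[0,1]$, whence $f+g \in L_{\min(q_1,q_2)}[0,1] \subset F$; closure under scalar multiplication is immediate. Thus $F$ is a subspace, and Theorem~\ref{T1} applies as soon as I identify $\mathrm{codim}(F)$.

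The crux, and the only real work, is to show $\mathrm{codim}(F) = \mathfrak{c}$. The upper bound is routine: $L_p[0,1]$ is an infinite-dimensional complete metrizable topological vector space, so its Hamel dimension equals $\mathfrak{c}$, and $\mathrm{codim}(F) = \dim(E/F) \le \dim(E) = \mathfrak{c}$. For the lower bound I would invoke the spaceability result of Botelho et al.~\cite{BFPS}: there is a closed, infinite-dimensional subspace $W \subset E$ with $W \smallsetminus \{0\} \subset E \smallsetminus F$, that is, $W \cap F = \{0\}$. Being itself an infinite-dimensional complete metrizable topological vector space, $W$ has Hamel dimension $\mathfrak{c}$; and since $W \cap F = \{0\}$, the canonical quotient map $E \to E/F$ restricts to an injection on $W$, giving $\dim(E/F) \ge \dim(W) = \mathfrak{c}$. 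Combining the two bounds yields $\mathrm{codim}(F) = \mathfrak{c}$ (in particular $F$ is a proper subspace).

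With $\mathrm{codim}(F) = \mathfrak{c}$ established, Theorem~\ref{T1} gives at once that $E \smallsetminus F = L_p[0,1] \smallsetminus \bigcup_{q>p} L_q[0,1]$ is $(\alpha,\mathfrak{c})$-lineable for every $\alpha < \mathfrak{c}$, which is exactly the assertion. I expect the single genuine obstacle to be the lower bound $\mathrm{codim}(F) \ge \mathfrak{c}$; rather than quoting \cite{BFPS}, one could instead exhibit the family explicitly, for instance by transplanting critical functions of the form $x^{-1/p}|\log x|^{-s}$ (with $s > 1/p$) onto a disjoint sequence of intervals shrinking to $0$ and checking that no nontrivial finite linear combination acquires extra integrability, thereby witnessing $\mathfrak{c}$ linearly independent classes in $E/F$ directly. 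The spaceability route is shorter and self-contained given the cited literature, so that is the one I would carry out.
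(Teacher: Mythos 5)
Your proposal is correct and follows essentially the route the paper intends: the paper states this as a ``direct consequence'' of Theorem~\ref{T1}, and the intended mechanism is exactly your computation, namely that $\dim(L_p[0,1])=\mathfrak{c}$ gives the upper bound on $\mathrm{codim}\bigl(\bigcup_{q>p}L_q[0,1]\bigr)$ while the $\mathfrak{c}$-lineability coming from the spaceability result of \cite{BFPS} gives the lower bound (this is precisely the content of Corollary~\ref{S1}). Your verification that the union is a subspace and that a closed infinite-dimensional subspace of a complete metrizable space has Hamel dimension $\mathfrak{c}$ correctly fills in the details the paper leaves implicit.
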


In this section, our main objective is to give (few and weak) sufficient conditions to achieve general non-$(\alpha,\beta)$-spaceability criteria. In this sense, let us recall the following definition.

\begin{definition}
Let $E$ be a topological vector space and $\alpha$ be a cardinal number. We say that a subset $A$ of $E$ is $\alpha$-dense-lineable whenever $A \cup \{0\}$ contains a dense vector subspace $F$ of $E$ with $\dim(F) = \alpha$.
\end{definition}

The following lemma will be useful to prove one of the main theorems of this section (Theorems \ref{T6}).

\begin{lemma}\label{L5}
Let $E$ be a topological vector space and $A$ an $\alpha$-dense-lineable subset of $E$. If $A$ is $(\alpha,\beta)$-lineable for some $\alpha<\beta$, then $A$ is $\beta$-dense-lineable.
\end{lemma}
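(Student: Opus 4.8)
The plan is to combine the dense subspace furnished by $\alpha$-dense-lineability with the extension guaranteed by $(\alpha,\beta)$-lineability, and then to observe that density is inherited upward. The whole argument is a direct chaining of the two hypotheses, so I expect it to be short.

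First I would use the assumption that $A$ is $\alpha$-dense-lineable to fix a vector subspace $F_\alpha$ of $E$ with $\dim(F_\alpha)=\alpha$, $F_\alpha\subset A\cup\{0\}$, and $F_\alpha$ dense in $E$. This is exactly the subspace promised by the definition of $\alpha$-dense-lineability, and it will serve as the seed for the next step.

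Next I would invoke the hypothesis that $A$ is $(\alpha,\beta)$-lineable. By definition this property applies to \emph{every} subspace of dimension $\alpha$ sitting inside $A\cup\{0\}$; in particular it applies to the dense subspace $F_\alpha$ just selected. Hence there exists a subspace $F_\beta\subset E$ with $\dim(F_\beta)=\beta$ and
\[
F_\alpha\subset F_\beta\subset A\cup\{0\}.
\]
The crucial point — and really the only place where the topology of $E$ enters — is that $F_\beta$ contains the dense set $F_\alpha$, so $\overline{F_\beta}\supset\overline{F_\alpha}=E$; that is, $F_\beta$ is itself dense in $E$.

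Putting these together, $F_\beta$ is a dense vector subspace of $E$ of dimension $\beta$ contained in $A\cup\{0\}$, which is precisely the assertion that $A$ is $\beta$-dense-lineable. There is essentially no genuine obstacle here: the subtle decision is simply to feed the \emph{dense} $\alpha$-dimensional subspace (rather than some arbitrary one) into the $(\alpha,\beta)$-lineability hypothesis, after which the elementary fact that any superset of a dense set is dense finishes the proof.
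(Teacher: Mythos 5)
Your proof is correct and follows exactly the same route as the paper: pick the dense $\alpha$-dimensional subspace guaranteed by $\alpha$-dense-lineability, feed it into the $(\alpha,\beta)$-lineability hypothesis to obtain $F_\beta\supset F_\alpha$ inside $A\cup\{0\}$, and conclude density of $F_\beta$ from density of $F_\alpha$. No gaps.
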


\begin{proof}
Let $F_\alpha$ be a dense vector space of dimension $\alpha$ contained in $A\cup\{0\}$. Since $A$ is $(\alpha,\beta)$-lineable, there exists a subspace $F_\beta$ of $E$, of dimension $\beta$, containing $F_\alpha$ and contained in $A\cup\{0\}$. Since $F_\alpha$ is dense in $E$, it follows that $F_\beta$ is also dense, from which we obtain the $\beta$-dense-lineability of $A$.
\end{proof}

\begin{theorem}\label{T6}
Let $E$ be a topological vector space and $A$ a subset of $E$ with $\mathrm{card}(E\smallsetminus A)\geq2$. If $A$ is $\alpha$-dense-lineable and $(\alpha,\beta)$-lineable, then $A$ is not $(\gamma,\lambda)$-spaceable for any $\alpha\leq\gamma\leq\beta$, regardless of the cardinal number $\lambda$.
\end{theorem}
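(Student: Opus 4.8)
The plan is to establish the stronger statement that $A$ is $\gamma$-dense-lineable for every $\gamma$ with $\alpha\leq\gamma\leq\beta$, and then to exploit the tension between density and closedness. Indeed, once we know that $A\cup\{0\}$ contains a \emph{dense} subspace $G$ of dimension $\gamma$, any closed subspace $F_\lambda$ with $G\subset F_\lambda\subset A\cup\{0\}$ must satisfy $E=\overline{G}\subset F_\lambda$, whence $F_\lambda=E\subset A\cup\{0\}$; but this forces $\mathrm{card}(E\smallsetminus A)\leq 1$, contradicting the hypothesis $\mathrm{card}(E\smallsetminus A)\geq 2$. Since disproving $(\gamma,\lambda)$-spaceability only requires exhibiting a \emph{single} $\gamma$-dimensional subspace of $A\cup\{0\}$ that admits no closed $\lambda$-dimensional extension inside $A\cup\{0\}$, such a dense $G$ does the job for \emph{any} cardinal $\lambda$.

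To produce such a $G$, I would first use the $\alpha$-dense-lineability of $A$ to fix a dense subspace $F_\alpha\subset A\cup\{0\}$ with $\dim(F_\alpha)=\alpha$; note that $\alpha\geq\aleph_0$, since a dense subspace of a topological vector space cannot be finite-dimensional. Applying the $(\alpha,\beta)$-lineability of $A$ to $F_\alpha$ yields a subspace $F_\beta$ with $F_\alpha\subset F_\beta\subset A\cup\{0\}$ and $\dim(F_\beta)=\beta$; since $F_\beta\supset F_\alpha$, it is again dense (this is exactly the content of Lemma \ref{L5}). Now choose a basis $\mathcal{B}_\alpha$ of $F_\alpha$ and extend it to a basis $\mathcal{B}_\beta$ of $F_\beta$. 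As $\alpha<\beta$, cardinal arithmetic gives $\mathrm{card}(\mathcal{B}_\beta\smallsetminus\mathcal{B}_\alpha)=\beta$, so for each $\gamma$ with $\alpha\leq\gamma\leq\beta$ I can select $\mathcal{S}\subset\mathcal{B}_\beta\smallsetminus\mathcal{B}_\alpha$ with $\mathrm{card}(\mathcal{S})=\gamma$ and set $G=\mathrm{span}(\mathcal{B}_\alpha\cup\mathcal{S})$. Then $F_\alpha\subset G\subset F_\beta\subset A\cup\{0\}$, so $G$ is dense, and $\dim(G)=\alpha+\gamma=\gamma$ because $\gamma\geq\alpha\geq\aleph_0$.

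With $G$ in hand, I would finish by contradiction exactly as sketched above: a hypothetical closed $\lambda$-dimensional subspace $F_\lambda$ trapped between $G$ and $A\cup\{0\}$ would have to equal $E$ by density, contradicting $\mathrm{card}(E\smallsetminus A)\geq 2$. The only delicate point is the cardinal bookkeeping in the interpolation step --- verifying that $\mathcal{B}_\beta\smallsetminus\mathcal{B}_\alpha$ has full cardinality $\beta$ and that $\dim(G)=\gamma$ --- together with the observation that $\alpha$ is necessarily infinite, which is what makes $\alpha+\gamma=\gamma$ legitimate and lets one construction cover the whole range $\alpha\leq\gamma\leq\beta$ uniformly (the endpoints $\gamma=\alpha$ and $\gamma=\beta$ recovering $F_\alpha$ and $F_\beta$ respectively). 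Everything else reduces to the elementary fact that a closed subspace containing a dense subspace must be the whole space.
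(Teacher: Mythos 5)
Your proposal is correct and follows essentially the same route as the paper: obtain a dense $\gamma$-dimensional subspace of $A\cup\{0\}$ for each $\alpha\leq\gamma\leq\beta$ and then observe that the only closed subspace containing a dense subspace is $E$ itself, which is excluded by $\mathrm{card}(E\smallsetminus A)\geq 2$. Your explicit basis-interpolation inside $F_\beta$ simply spells out the step the paper compresses into the assertion that $(\alpha,\beta)$-lineability implies $(\alpha,\gamma)$-lineability combined with Lemma \ref{L5}.
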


\begin{proof}
Since $(\alpha,\beta)$-lineability implies $(\alpha,\gamma)$-lineability for all $\gamma\in(\alpha,\beta)$, as a consequence of Lemma \ref{L5} we have that $A$ is $\gamma$-dense-lineable for all $\gamma\in[\alpha,\beta]$. Let $F$ be a dense subspace of dimension $\gamma$ contained in $A\cup\{0\}$. The only closed subspace of $E$ that contains $F$ is $E$. Since $E\not\subset A\cup\{0\}$, we conclude that $A$ cannot be $(\gamma,\lambda)$-spaceable.
\end{proof}

An immediate consequence of the above theorem is the following result:

\begin{corollary}\label{JKHSKJAHK}
Let $E$ be a topological vector space and $A$ a subset of $E$ with $\mathrm{card}(E\smallsetminus A)\geq2$. If $A$ is $\alpha$-dense-lineable, then $A$ is not $(\alpha,\beta)$-spaceable.
\end{corollary}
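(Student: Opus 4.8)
The plan is to obtain this as an immediate consequence of Theorem \ref{T6}, arguing by contradiction. I would suppose that $A$ is $(\alpha,\beta)$-spaceable and then show that the two hypotheses of Theorem \ref{T6} are in fact satisfied; the theorem then forces $A$ \emph{not} to be $(\alpha,\beta)$-spaceable, which is the contradiction.

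The first step is to observe that $(\alpha,\beta)$-spaceability is formally stronger than $(\alpha,\beta)$-lineability. By definition, in the spaceable case the extension subspace $F_\beta$ realizing \eqref{ab} can always be chosen closed; dropping the closedness requirement leaves exactly the definition of $(\alpha,\beta)$-lineability. Hence, under the assumption that $A$ is $(\alpha,\beta)$-spaceable, $A$ is in particular $(\alpha,\beta)$-lineable. Combined with the standing hypothesis that $A$ is $\alpha$-dense-lineable, both premises of Theorem \ref{T6} now hold.

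I would then invoke Theorem \ref{T6} with the choices $\gamma=\alpha$ and $\lambda=\beta$; this is legitimate since $\alpha\leq\gamma=\alpha\leq\beta$. The theorem yields that $A$ is not $(\alpha,\beta)$-spaceable, contradicting the assumption and completing the argument.

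I do not anticipate any genuine obstacle: the whole content has been absorbed into Theorem \ref{T6}, and the only point requiring (routine) care is the implication ``$(\alpha,\beta)$-spaceable $\Rightarrow$ $(\alpha,\beta)$-lineable'' used to activate it. For the reader's intuition I would also recall the mechanism behind Theorem \ref{T6} in this special case: a dense subspace $F_\alpha\subset A\cup\{0\}$ supplied by $\alpha$-dense-lineability is contained in no proper closed subspace of $E$, so any closed subspace $F_\beta\supseteq F_\alpha$ must coincide with $E$; but then $E\subset A\cup\{0\}$ would give $\mathrm{card}(E\smallsetminus A)\leq1$, contrary to hypothesis.
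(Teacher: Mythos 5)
Your proof is correct and follows the paper's intended route: the paper states this as an ``immediate consequence'' of Theorem \ref{T6} without further argument, and your contradiction step (``$(\alpha,\beta)$-spaceable $\Rightarrow$ $(\alpha,\beta)$-lineable'', which activates the theorem with $\gamma=\alpha$, $\lambda=\beta$) is exactly the right way to make that deduction literal, since the corollary omits the lineability hypothesis of Theorem \ref{T6}. Your closing remark also correctly identifies the underlying direct mechanism, namely that the case $\gamma=\alpha$ of the theorem's proof never uses the $(\alpha,\beta)$-lineability assumption at all.
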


The demonstration given in \cite{fprs} that $L_p[0,1]\smallsetminus\cup_{q>p}L_q[0,1]$ is not $(\alpha,\beta)$-spaceable for $\alpha\geq\aleph_0$, regardless of the cardinal number $\beta$, was a consequence of the following result:

\begin{corollary}{\cite[Corollary 2.3]{fprs}}\label{AAAAA}
Let $\alpha\geq\aleph_0$ and $\beta$ be a cardinal number. Let $E$ be a Banach space or $p$-Banach space ($p > 0$) and $F$ be a non-trivial subspace of $E$. If $E \smallsetminus F$ is $\alpha$-lineable then $E \smallsetminus F$ is not $(\alpha, \beta)$-spaceable.
\end{corollary}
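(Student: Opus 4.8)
The plan is to refute $(\alpha,\beta)$-spaceability directly, using the closure idea behind Corollary \ref{JKHSKJAHK} but in a \emph{localized} form: instead of producing a dense subspace contained in $(E\smallsetminus F)\cup\{0\}$ (which is impossible once $\alpha$ lies below the density character of $E$, e.g.\ for non-separable $E$), I would exhibit a single subspace $F_\alpha\subset(E\smallsetminus F)\cup\{0\}$ with $\dim(F_\alpha)=\alpha$ whose \emph{closure} already meets $F$ in a nonzero vector. Once such an $F_\alpha$ is available, any closed subspace $H\supseteq F_\alpha$ satisfies $\overline{F_\alpha}\subseteq H$, so $H$ contains a nonzero element of $F$ and hence $H\not\subset(E\smallsetminus F)\cup\{0\}$. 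Consequently no closed $F_\beta$ with $F_\alpha\subset F_\beta\subset(E\smallsetminus F)\cup\{0\}$ can exist, for \emph{any} cardinal $\beta$, which is precisely the negation of $(\alpha,\beta)$-spaceability.

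To build $F_\alpha$ I would start from the hypothesis: since $E\smallsetminus F$ is $\alpha$-lineable, there is a subspace $W$ with $\dim(W)=\alpha$ and $W\cap F=\{0\}$. Fix a basis $\{w_i\}_{i\in I}$ of $W$ (so $|I|=\alpha\ge\aleph_0$) and a nonzero vector $f\in F$, which exists because $F$ is non-trivial. Separate a countable subfamily $\{w_n\}_{n\ge1}$ and let $J\subseteq I$ index the remaining basis vectors $\{w_j\}_{j\in J}$; since one discards only countably many indices from a set of infinite cardinality $\alpha$, still $|J|=\alpha$. Choosing scalars $\varepsilon_n=1/(n\|w_n\|)\neq0$, set $u_n=f+\varepsilon_n w_n$ and
\[
F_\alpha=\mathrm{span}\bigl(\{u_n:n\ge1\}\cup\{w_j:j\in J\}\bigr).
\]
Because $\|\varepsilon_n w_n\|=1/n\to0$ (homogeneity of the quasi-norm handles the $p$-Banach case identically), we get $u_n\to f$, so $f\in\overline{F_\alpha}$.

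The heart of the proof is the bookkeeping that grants $F_\alpha$ its two competing properties at once. Given a finite combination $\sum_n a_nu_n+\sum_j b_jw_j$, collecting the copies of $f$ rewrites it as $(\sum_n a_n)f+\bigl(\sum_n a_n\varepsilon_n w_n+\sum_j b_jw_j\bigr)$, where the parenthesized vector lies in $W$. If this combination lies in $F$ (in particular if it is $0$), then $(\sum_n a_n)f\in F$ forces the $W$-part into $W\cap F=\{0\}$; the linear independence of $\{w_i\}_{i\in I}$ together with $\varepsilon_n\neq0$ then gives $a_n=b_j=0$. Running this argument with ``$=0$'' shows $\{u_n\}\cup\{w_j\}$ is linearly independent, so $\dim(F_\alpha)=\aleph_0+|J|=\alpha$; running it with ``$\in F$'' shows $F_\alpha\cap F=\{0\}$, i.e.\ $F_\alpha\subset(E\smallsetminus F)\cup\{0\}$. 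Combined with $f\in\overline{F_\alpha}\cap(F\smallsetminus\{0\})$, this furnishes the $F_\alpha$ required in the first paragraph.

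The main obstacle is exactly this simultaneity: $F_\alpha$ must avoid $F$ off the origin yet have closure hitting $F$, while keeping dimension exactly $\alpha$. The device that reconciles these is anchoring every generator $u_n$ at the same $f\in F$ and tilting it by a null sequence $\varepsilon_n w_n$ drawn from $W$; the $W$-part is what keeps $F_\alpha$ out of $F$, and its vanishing is what drives $u_n$ back to $f$. I expect no real difficulty beyond this linear-algebra accounting. It is worth stressing where $\alpha\ge\aleph_0$ is indispensable: the construction needs a genuinely infinite approximating sequence $u_n\to f$ lying inside $F_\alpha$, whereas a finite-dimensional $F_\alpha$ is automatically closed with $\overline{F_\alpha}\cap F=\{0\}$, so the method collapses — in agreement with the fact that the range $2\le\alpha<\aleph_0$ remains open.
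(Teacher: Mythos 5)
Your argument is correct. Note first that the paper itself offers no proof of Corollary \ref{AAAAA}: the statement is imported verbatim from \cite{fprs}, so there is no internal proof to compare against. Your perturbation construction --- anchoring $u_n=f+\varepsilon_n w_n$ at a fixed nonzero $f\in F$ so that $F_\alpha\cap F=\{0\}$ while $f\in\overline{F_\alpha}$, whence no closed subspace containing $F_\alpha$ can stay inside $(E\smallsetminus F)\cup\{0\}$ --- is essentially the argument of the cited source, and it is genuinely different from the route this paper takes for its own related non-spaceability results (Corollary \ref{JKHSKJAHK} and Theorem \ref{C7}), which pass through dense-lineability and therefore require separability of $E$ and infinite codimension of $F$; your localized version needs neither, works for non-separable $E$, and in fact never uses completeness, only the (quasi-)norm topology. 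Two cosmetic points: when $\alpha=\aleph_0$ the residual index set $J$ need not satisfy $\mathrm{card}(J)=\alpha$ (it can even be empty), but your actual dimension count $\aleph_0+\mathrm{card}(J)=\alpha$ is still correct; and in the $p$-Banach case you should either normalize the $w_n$ first or choose $\varepsilon_n$ directly so that $\|\varepsilon_n w_n\|=1/n$ under whichever homogeneity convention is in force --- a harmless adjustment that does not affect the argument.
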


Our next main result will, in a sense, weaken the assumptions of Corollary \ref{AAAAA}. On the one hand, we will \textit{only} ask that $E$ be a metrizable topological vector space, but, in return, the separability of $E$ will also be required.

We will need the following result:

\begin{theorem}\cite[Theorem 2.5]{bc}\label{T8}
Let $E$ be a metrizable separable topological vector space and $F$ be a vector subspace of $E$. If $F$ has infinite codimension, then $E \smallsetminus F$ is $\aleph_0$-dense-lineable.
\end{theorem}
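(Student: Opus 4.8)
The plan is to build, by induction, a linearly independent sequence $(y_n)_{n\geq 1}$ in $E$ whose span $G=\mathrm{span}\{y_n:n\geq1\}$ is dense, has dimension $\aleph_0$, and satisfies $G\cap F=\{0\}$. This last condition is exactly what is needed, since $G\cap F=\{0\}$ means $G\smallsetminus\{0\}\subset E\smallsetminus F$, so $G$ witnesses the $\aleph_0$-dense-lineability of $E\smallsetminus F$. First I would fix a metric $d$ inducing the topology of $E$ (available since $E$ is metrizable; only the balls $B(x,\varepsilon)$ will be used) and, using separability, a countable dense set $\{z_k\}$. To later guarantee density of the span it is convenient to re-enumerate this set with repetitions as a sequence $(x_n)$ in which each $z_k$ occurs for infinitely many indices $n$.

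The key elementary observation is that, once $G_{n-1}:=\mathrm{span}\{y_1,\dots,y_{n-1}\}$ already satisfies $G_{n-1}\cap F=\{0\}$, the condition $G_n\cap F=\{0\}$ for $G_n:=\mathrm{span}(G_{n-1}\cup\{y_n\})$ is equivalent to the single requirement $y_n\notin F+G_{n-1}$. Indeed, if $g+c\,y_n\in F$ with $g\in G_{n-1}$ and $c\neq0$, then $y_n\in F+G_{n-1}$; and $c=0$ forces $g\in F\cap G_{n-1}=\{0\}$. So the whole construction reduces to choosing, at each stage, a vector $y_n$ close to $x_n$ that avoids the subspace $F+G_{n-1}$.

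For the inductive step, suppose $G_{n-1}$ has been built with $G_{n-1}\cap F=\{0\}$ and $\dim G_{n-1}=n-1$. Since $G_{n-1}$ injects into $E/F$, the subspace $F+G_{n-1}$ has codimension $\mathrm{codim}(F)-(n-1)$, which is still infinite; in particular $F+G_{n-1}$ is a \emph{proper} subspace of $E$. Now a proper subspace of a topological vector space has empty interior (a subspace with an interior point contains a neighborhood of $0$, which is absorbing, hence equals $E$), so its complement $E\smallsetminus(F+G_{n-1})$ is dense. Therefore the ball $B(x_n,1/n)$ meets this complement, and I pick $y_n\in B(x_n,1/n)$ with $y_n\notin F+G_{n-1}$. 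By the equivalence above, $G_n$ satisfies $G_n\cap F=\{0\}$, and since $y_n\notin G_{n-1}$ we get $\dim G_n=n$.

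Finally I set $G=\bigcup_n G_n$. Every element of $G$ lies in some $G_n$, so $G\cap F=\{0\}$ and $\dim G=\aleph_0$; and $G$ is dense because, for each $z_k$ and each $m$, choosing an index $n>m$ with $x_n=z_k$ gives $d(y_n,z_k)<1/n<1/m$ with $y_n\in G$, so $\overline{G}$ contains the dense set $\{z_k\}$ and hence equals $E$. This establishes the $\aleph_0$-dense-lineability of $E\smallsetminus F$. The step I expect to be the crux is the properness of $F+G_{n-1}$: it is precisely here that the hypothesis of infinite codimension is indispensable, since it ensures that adjoining the finitely many directions $y_1,\dots,y_{n-1}$ never exhausts $E$, so $F+G_{n-1}$ keeps a dense complement and the approximation can always be carried out. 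The separability and metrizability serve only to reduce density to approximating countably many points, with the repeated enumeration providing the bookkeeping that upgrades "each $x_n$ is approximated" to genuine density.
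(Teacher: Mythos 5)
The paper does not prove this statement at all: it is quoted verbatim from Bernal-Gonz\'alez and Cabrera \cite{bc} (their Theorem 2.5) and used as a black box, so there is no internal proof to compare against. Your argument is a correct, self-contained proof and is essentially the standard one for results of this type: the equivalence $G_n\cap F=\{0\}\Leftrightarrow y_n\notin F+G_{n-1}$ is verified correctly, the infinite codimension of $F$ guarantees that $F+G_{n-1}$ stays proper (hence has empty interior and dense complement, so the required $y_n\in B(x_n,1/n)$ exists), and the enumeration with infinite repetitions correctly upgrades the approximations to density of $\mathrm{span}\{y_n\}$. No gaps.
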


\begin{theorem}\label{C7}
Let $E$ be a metrizable separable topological vector space and $F$ be a proper vector subspace of $E$. If $F$ has infinite codimension, then $E\smallsetminus F$ is not $(\alpha,\beta)$-spaceable for all $\aleph_0\leq\alpha\leq\mathrm{codim}(F)$.
\end{theorem}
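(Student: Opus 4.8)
The plan is to set $A:=E\smallsetminus F$ and run it through the non-spaceability criterion of Theorem \ref{T6}, whose two hypotheses are $\alpha$-dense-lineability and $(\alpha,\beta)$-lineability; the natural choice of parameters is $\alpha=\aleph_0$ and $\beta=\mathrm{codim}(F)$. The dense-lineability input is handed to us directly by Theorem \ref{T8}: since $E$ is metrizable and separable and $F$ has infinite codimension, $A$ is $\aleph_0$-dense-lineable. The lineability input is purely algebraic and is exactly what Theorem \ref{T1} supplies, namely that $A=E\smallsetminus F$ is $(\aleph_0,\mathrm{codim}(F))$-lineable whenever $\aleph_0<\mathrm{codim}(F)$.

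First I would dispatch the cardinality hypothesis of Theorem \ref{T6}. Here $E\smallsetminus A=F$, and as long as $F\neq\{0\}$ we have $\mathrm{card}(E\smallsetminus A)=\mathrm{card}(F)\geq2$, which is precisely what forces $A\cup\{0\}\subsetneq E$. Then, in the principal case $\mathrm{codim}(F)>\aleph_0$, I would invoke Theorem \ref{T6} with $\alpha=\aleph_0$ and $\beta=\mathrm{codim}(F)$: feeding it the $\aleph_0$-dense-lineability of Theorem \ref{T8} and the $(\aleph_0,\mathrm{codim}(F))$-lineability of Theorem \ref{T1} immediately yields that $A$ is not $(\gamma,\lambda)$-spaceable for every $\aleph_0\leq\gamma\leq\mathrm{codim}(F)$ and every cardinal $\lambda$, which is the assertion. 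Unwinding Lemma \ref{L5} and the proof of Theorem \ref{T6}, the mechanism is that each intermediate $\gamma$ inherits $\gamma$-dense-lineability, and a dense subspace of dimension $\gamma$ lying in $A\cup\{0\}$ has $E$ as its only closed superspace, so the proper inclusion $A\cup\{0\}\subsetneq E$ obstructs every closed witness.

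The step I expect to demand the most care is the boundary case $\mathrm{codim}(F)=\aleph_0$, in which Theorem \ref{T6} is not directly available: the definition of $(\alpha,\beta)$-lineability requires the strict inequality $\alpha<\beta$, so Theorem \ref{T1} produces no $(\aleph_0,\aleph_0)$-lineability to supply it. Fortunately, in this regime the target range $\aleph_0\leq\alpha\leq\mathrm{codim}(F)$ collapses to the single value $\alpha=\aleph_0$, and the conclusion then drops out of Corollary \ref{JKHSKJAHK} alone, using only the $\aleph_0$-dense-lineability of Theorem \ref{T8} together with $\mathrm{card}(E\smallsetminus A)\geq2$. The two regimes together exhaust the stated range, and the only genuinely delicate points are this endpoint bookkeeping and the (easy to overlook) fact that nontriviality of $F$ is what secures the cardinality hypothesis; were $F=\{0\}$, one would have $A\cup\{0\}=E$ and the whole argument, correctly, would break down.
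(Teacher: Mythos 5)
Your proposal is correct and follows essentially the same route as the paper: Theorem \ref{T8} for $\aleph_0$-dense-lineability, Theorem \ref{T1} for $(\aleph_0,\mathrm{codim}(F))$-lineability, and Theorem \ref{T6} to conclude. In fact you are slightly more careful than the paper's own proof, which silently assumes $\mathrm{codim}(F)>\aleph_0$ (so that Theorem \ref{T1} applies with $\alpha=\aleph_0$) and does not remark that $F\neq\{0\}$ is what secures the hypothesis $\mathrm{card}(E\smallsetminus A)\geq2$; your endpoint case via Corollary \ref{JKHSKJAHK} fills the first of these gaps cleanly.
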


\begin{proof}
From Theorem \ref{T8} we conclude that $E\smallsetminus F$ is $\aleph_0$-dense-lineable. From Theorem \ref{T1} we know that $E\smallsetminus F$ is $(\alpha,\mathrm{codim}(F))$-lineable for all $\alpha<\mathrm{codim}(F)$, and in particular for $\alpha\geq\aleph_0$. The result follows from Theorem \ref{T6}.
\end{proof}

In addition to several other applications in other contexts, the above result provides a new demonstration for the not $(\alpha,\beta)$-spaceability of $L_p[0,1]\smallsetminus\cup_{q>p} L_q[0,1]$ for all $p>0$ and all $\alpha\geq\aleph_0$.


\[
\]







\section{Applications}\label{sec4}

Our main results (Theorems \ref{T1}, \ref{T6} and \ref{C7}) and the corollaries derived from it (Corollaries \ref{1}, \ref{S1}, \ref{3} and \ref{JKHSKJAHK}) recover or generalize many classical lineability results. Among them, we can highlight some results of \cite{AP,bbfp,BCP,BDFP,FPR,DR2,PS,daniel}.

As a matter of curiosity, we will detail how to retrieve and generalize, for example, the results of \cite{bbfp} and \cite{BDFP}. Let us first recall some definitions. Given a normed space $X$, for $p>0$ we denote by $\ell_p(X)$ the space of sequences $(x_k)_{k\in\mathbb{N}}$ with values in $X$ with $\sum_{k}\|x_k\|^p<\infty$, equipped with the natural norm ($p$-norm if $p<1$)
\[
\|(x_k)_{k\in\mathbb{N}}\|_p=\left(\sum_{k}\|x_k\|^p\right)^\frac{1}{p}.
\]
When $X=\mathbb{K}$ we simply denote $\ell_p(\mathbb{K})=\ell_p$.

In \cite[Corollary 3.5(c)]{bbfp} the authors proved that for an infinite-dimensional Banach space $X$ and for $1\leq p<\infty$ the set $$\bigcap_{p<q}\ell_q (X)\smallsetminus\ell_p(X)$$ is maximal spaceable in $\cap_{p<q}\ell_q(X)$. However, it is not difficult to see that the lineability of the above result is still valid for $0<p<\infty$. In fact, consider $\{x_\gamma\in X: \gamma\in \Gamma\}$ a basis of $X$. Note that $$\left(\frac{1}{n^\frac{1}{p}}x_\gamma\right)_{n\in\mathbb{N}}\in\bigcap_{p<q}\ell_q(X)\smallsetminus\ell_p(X)$$ for all $\gamma\in \Gamma$. It is also not difficult to verify that $\left\{\left(\frac{1}{n^\frac{1}{p}}x_\gamma\right)_{n\in\mathbb{N}}:\gamma\in \Gamma\right\}$ is a linearly independent set such that every nontrivial linear combination still remains in $\cap_{p<q}\ell_q(X)\smallsetminus\ell_p(X)$, that is, $\cap_{p<q}\ell_q(X)\smallsetminus\ell_p(X)$ is maximal lineable.

Corollary \ref{S1} together with the aforementioned improvement of the Corollary 3.5(c) gives us the following result:

\begin{proposition}
Let $X$ be an infinite-dimensional Banach space and $\beta=\dim(X)$. Then the set $$\bigcap_{p<q}\ell_q(X)\smallsetminus\ell_p(X)$$ is $(\alpha,\beta)$-lineable for all $\alpha<\beta$ and all $p>0$.
\end{proposition}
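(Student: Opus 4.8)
The plan is to invoke Corollary \ref{S1} directly, so the essential work is to verify its hypotheses for the set $A=\bigcap_{p<q}\ell_q(X)\smallsetminus\ell_p(X)$ sitting inside the ambient space $E=\bigcap_{p<q}\ell_q(X)$. Corollary \ref{S1} states that if $E$ has dimension $\beta$ and $E\smallsetminus F$ is $\beta$-lineable, then $E\smallsetminus F$ is $(\alpha,\beta)$-lineable for all $\alpha<\beta$. The natural identification here is $F=\ell_p(X)$, which is a proper subspace of $E$ since the improved Corollary 3.5(c) argument exhibits elements of $E$ not in $F$. Thus the target set is exactly $E\smallsetminus F$, and the proposition will follow the moment I know that $E\smallsetminus F$ is $\beta$-lineable with $\beta=\dim(E)$.

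First I would establish that $\dim(E)=\beta=\dim(X)$. The improved version of Corollary 3.5(c) recalled just above the proposition already produces a linearly independent family $\{(n^{-1/p}x_\gamma)_{n}:\gamma\in\Gamma\}$ indexed by a basis $\{x_\gamma:\gamma\in\Gamma\}$ of $X$, so $\mathrm{card}(\Gamma)=\dim(X)=\beta$, and this family lies in $E\smallsetminus F$ with every nontrivial linear combination remaining in $E\smallsetminus F$. This is precisely a statement that $E\smallsetminus F$ is $\beta$-lineable. Next I would check that $\dim(E)=\beta$ as well: the inclusion $\dim(E)\geq\beta$ is immediate from the $\beta$-lineability just noted, and for the reverse inequality I would observe that $E=\bigcap_{p<q}\ell_q(X)$ embeds into a product/sum structure over $X$ whose algebraic dimension is controlled by $\dim(X)=\beta$ together with the countable index set, giving $\dim(E)\leq\aleph_0\cdot\beta=\beta$ since $\beta$ is infinite.

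With both facts in hand the application is mechanical: $E$ has dimension $\beta$, $F=\ell_p(X)$ is a proper subspace, and $E\smallsetminus F$ is $\beta$-lineable, so Corollary \ref{S1} yields that $E\smallsetminus F=\bigcap_{p<q}\ell_q(X)\smallsetminus\ell_p(X)$ is $(\alpha,\beta)$-lineable for every $\alpha<\beta$. The same reasoning is uniform in $p>0$, so the conclusion holds for all $p>0$ as claimed.

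The main obstacle I anticipate is the bookkeeping in the dimension count $\dim(E)\leq\beta$ rather than anything conceptual. One must be slightly careful that the relevant notion throughout is the Hamel (algebraic) dimension, so that $\dim(E)=\beta$ genuinely equals $\dim(X)$ and the hypothesis ``$E$ is a vector space of dimension $\beta$'' in Corollary \ref{S1} is literally met; the cardinal arithmetic $\aleph_0\cdot\beta=\beta$ for infinite $\beta$ is what closes this gap. Everything else reduces to the already-recalled $\beta$-lineability from the improved Corollary 3.5(c) and a direct citation of Corollary \ref{S1}.
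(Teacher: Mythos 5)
Your overall strategy is exactly the paper's: take $E=\bigcap_{p<q}\ell_q(X)$ and $F=\ell_p(X)$, use the improved Corollary 3.5(c) to get $\beta$-lineability of $E\smallsetminus F$, check $\dim(E)=\beta$, and invoke Corollary \ref{S1}. The paper's proof is a one-liner that simply asserts $\dim\bigl(\bigcap_{p<q}\ell_q(X)\bigr)=\dim(\ell_p(X))=\dim(X)=\beta$ and cites Corollary \ref{S1}, so in outline you have reproduced it faithfully and even supplied more of the missing bookkeeping.

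The one place where your extra detail goes wrong is the upper bound $\dim(E)\leq\aleph_0\cdot\beta$. The algebraic dimension of a space of $X$-valued sequences is not controlled by $\aleph_0\cdot\dim(X)$: already for $X=\mathbb{K}$ one has $\aleph_0\cdot\dim(\mathbb{K})=\aleph_0$ while $\dim(\ell_q)=\mathfrak{c}$. The correct bound is via cardinality: $E\subset X^{\mathbb{N}}$ gives $\dim(E)\leq\mathrm{card}(X)^{\aleph_0}$, and one then uses that for an infinite-dimensional Banach space $\dim(X)=\mathrm{card}(X)=d(X)^{\aleph_0}$ (with $d(X)$ the density character), whence $\mathrm{card}(X)^{\aleph_0}=\mathrm{card}(X)=\beta$. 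So the conclusion $\dim(E)=\beta$ you need is true, and it is the same unproved assertion the authors make (compare the cardinality argument they do spell out in Proposition \ref{T80}), but the cardinal arithmetic you offer to close the gap would not survive scrutiny as written; replace $\aleph_0\cdot\beta=\beta$ by $\beta^{\aleph_0}=\beta$ and justify the latter as above.
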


\begin{proof}
Since $$\dim \left(\bigcap_{p<q}\ell_q(X)\right)=\dim (\ell_p(X))=\dim (X)=\beta,$$ the $(\alpha,\beta)$-lineability is a consequence of Corollary \ref{S1}.
\end{proof}

In \cite{BDFP} the authors introduce a very general class of sequence spaces and verify that some of its notable subspaces have spaceable complements.

\begin{definition}
Let $X\neq{0}$ be a Banach space.
\begin{itemize}
\item Given $x\in X^{\mathbb{N}}$, we denote by $x^0$ the zero-free version of $x$, i.e., if $x$ has only a finite amount of non-zero terms, then $x^0=0$; otherwise, $x^0=(x_j)_{j=1}^{\infty}$ where $x_j$ denotes the $j$-th non-zero coordinate of $x$.
\item A \textbf{Invariant sequence space on} $X$ is a Banach or quasi-Banach space of sequences with values in $X$ satisfying the following conditions:
\begin{itemize}
\item[a)] For $x\in X^{\mathbb{N}}$ with $x^0\neq0$, $x\in E$ if and only if $x^0\in E$, and in this case $\|x\|\leq K\|x^0\|$ for some constant depending only on the space $E$.
\item[b)] $\|x_j\|_X\leq\|x\|_E$ for all $x\in E$ and every $j\in\mathbb{N}$.
\end{itemize}
\item A \textbf{invariant sequence space} is an invariant sequence space over some Banach space $X$.
\end{itemize}
\end{definition}

The main result of \cite{BDFP} is the following:

\begin{theorem}\label{T7}
Let $E$ be an invariant sequence space over the Banach space $X$. Then
\begin{itemize}
\item[(a)] For each $\Gamma\subset (0,\infty]$, the set $$E\smallsetminus\bigcup_{p\in\Gamma}\ell_p(X)$$ is empty or spaceable.
\item[(b)] $E\smallsetminus c_0(X)$ is empty or spaceable.
\end{itemize}
\end{theorem}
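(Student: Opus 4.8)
The plan is to reduce the statement to the lineability/spaceability machinery already available, rather than to build closed subspaces by hand. The essential observation is that Theorem \ref{T7} concerns sets of the form $E\smallsetminus F$, where $F$ is a vector subspace of the invariant sequence space $E$. Indeed, for part (a) I would first verify that $\bigcup_{p\in\Gamma}\ell_p(X)\cap E$ is itself a vector subspace of $E$: each $\ell_p(X)$ is a subspace, and the union over $\Gamma$ is \emph{nested} because $\ell_p(X)\subset\ell_q(X)$ whenever $p\le q$ (this inclusion is where condition (b) in the definition of invariant sequence space, the domination $\|x_j\|_X\le\|x\|_E$, does its work in guaranteeing the summability passes correctly). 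A nested union of subspaces is a subspace, so the set in question is exactly a difference $E\smallsetminus F$ with $F$ a subspace. For part (b), the set $c_0(X)\cap E$ is likewise a subspace, so the same reduction applies.

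Once the set is written as $E\smallsetminus F$, the dichotomy ``empty or spaceable'' splits into two cases. If $F=E$ then $E\smallsetminus F=\emptyset$ and there is nothing to prove, so assume $F$ is a \emph{proper} subspace. The next step is to establish that $E\smallsetminus F$ is $\mathfrak{c}$-spaceable (or maximal spaceable), and here I would invoke the spaceability results that the paper treats as black boxes, namely the Botelho--F\'avaro--Pellegrino--Seoane type constructions for $\ell_p$-complements; the point of \cite{BDFP} is precisely that the invariant-sequence-space axioms (a) and (b) are exactly the structural hypotheses needed to run a single spaceability argument uniformly across all such spaces. Concretely, one produces an infinite family of sequences, built from a fixed zero-free ``slow-decay'' sequence spread across disjoint blocks of coordinates (using the zero-free-version invariance in axiom (a)), whose closed span lies in $E$ but avoids $F$ except at the origin.

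The heart of the matter, and the step I expect to be the main obstacle, is showing that the constructed infinite-dimensional subspace stays inside $(E\smallsetminus F)\cup\{0\}$ after taking the \emph{closure}: a nontrivial limit of elements of $E\smallsetminus F$ could in principle fall into $F$. Controlling this requires the domination axiom (b), $\|x_j\|_X\le\|x\|_E$, which forces coordinatewise convergence and lets one track where a limiting sequence's non-zero coordinates live, together with the invariance axiom (a), which ensures that passing to the zero-free version does not move a sequence out of $E$ or change its membership status relative to the $\ell_p(X)$ scale. The verification that the limit of block-supported combinations retains the ``not eventually in any $\ell_p$ for $p\in\Gamma$'' property is the genuinely technical computation, and it is the place where the quasi-Banach (rather than merely Banach) setting must be handled with the constant $K$ from axiom (a).

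Finally, I would note that, in the spirit of the present paper, once spaceability (a closed-subspace statement) is obtained one immediately gets lineability for free, and the corollaries of Theorem \ref{T1} can be layered on top to upgrade the conclusion to $(\alpha,\beta)$-lineability whenever $E\smallsetminus F$ is non-empty; but the spaceability itself, being a closed-subspace assertion, is not deducible from Theorem \ref{T1} alone and genuinely needs the explicit block construction together with the two invariance axioms.
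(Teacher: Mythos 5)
First, a point of reference: the paper you were given does not prove Theorem \ref{T7} at all --- it is quoted verbatim as ``the main result of \cite{BDFP}'' and used as a black box, so there is no internal proof to compare yours against. Judged on its own terms, your outline correctly identifies the strategy that \cite{BDFP} actually uses: pass to the zero-free version $x^0$ of a single witness $x\in E\smallsetminus\bigcup_{p\in\Gamma}\ell_p(X)$, redistribute its coordinates over infinitely many pairwise disjoint infinite subsets of $\mathbb{N}$ (axiom (a) keeps each resulting block sequence in $E$ with comparable norm), and use the coordinate domination $\|x_j\|_X\le\|x\|_E$ of axiom (b) to control limits in the closed span. Your preliminary reduction is also sound: $\bigcup_{p\in\Gamma}\ell_p(X)$ is a nested union of subspaces, hence a subspace (though the inclusion $\ell_p(X)\subset\ell_q(X)$ for $p\le q$ is elementary and has nothing to do with axiom (b), which constrains $E$, not the $\ell_p$ scale), and $c_0(X)\cap E$ is likewise a subspace.

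The gap is that the one step carrying all the content is not carried out. Writing the set as $E\smallsetminus F$ with $F$ a proper subspace buys nothing toward spaceability --- Theorem \ref{T1} and its corollaries produce only (non-closed) subspaces, as you yourself concede --- and the ``spaceability results that the paper treats as black boxes'' which you propose to invoke are precisely Theorem \ref{T7}; as written, that part of the argument is circular. What must actually be proved, and what you explicitly defer as ``the genuinely technical computation'', is: if $y\ne 0$ lies in the closure of the span of the block sequences $y_k$, then $y\notin\ell_p(X)$ for every $p\in\Gamma$ (resp.\ $y\notin c_0(X)$). The way to close this is to use axiom (b) to obtain coordinatewise convergence of the approximating combinations $\sum_k a_k^{(n)}y_k$, note that on the $k$-th block these restrict to $a_k^{(n)}x^0$, deduce from a nonzero coordinate of $y$ that for some $k$ the scalars $a_k^{(n)}$ converge to some $a\ne 0$, so that the restriction of $y$ to that block is $a\,x^0$, and finally observe that containing a subsequence equal to $a\,x^0$ already forces $\sum_j\|y_j\|^p=\infty$ (resp.\ $y\notin c_0(X)$), since $x^0$ inherits non-membership from $x$ and both $\ell_p$-membership and membership in $c_0$ are detected on subsequences. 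Until that verification is supplied, the proposal is a correct plan rather than a proof.
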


Under some conditions on the spaces $X$ and $E$, we get a $(\alpha,\beta)$-lineability result in this context:

\begin{proposition}\label{T80}
Let $\varnothing\neq\Gamma\subset(0,\infty]$ and let $X$ be a Banach space with $\mathrm{card}(X)=\mathfrak{c}$. If $E$ is an invariant sequence space over $X$, then $$E\smallsetminus\bigcup_{p\in\Gamma}\ell_p(X)$$ is empty or $(\alpha,\mathfrak{c})$-lineable for all $\alpha <\mathfrak{c}$.
\end{proposition}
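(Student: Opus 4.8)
The plan is to reduce the statement to Theorem \ref{T1} by exhibiting the set $A := E \smallsetminus \bigcup_{p\in\Gamma}\ell_p(X)$ as the complement $E \smallsetminus F$ of a genuine vector subspace $F$ of $E$, and then to verify that the codimension of $F$ in $E$ is exactly $\mathfrak{c}$. Once both facts are in hand, Theorem \ref{T1} gives $(\alpha, \mathrm{codim}(F))$-lineability of $E \smallsetminus F$ for every $\alpha < \mathrm{codim}(F)$, which is precisely the claimed $(\alpha, \mathfrak{c})$-lineability. So there are really two things to establish: that $F := \bigcup_{p\in\Gamma}\ell_p(X) \cap E$ is a subspace, and that $\mathrm{codim}(F) = \mathfrak{c}$.

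First I would set $F = E \cap \bigcup_{p\in\Gamma}\ell_p(X)$ and argue it is a linear subspace of $E$. This is the step I expect to require the most care, since a union of the spaces $\ell_p(X)$ need not be a vector space for an arbitrary index set. However, the family $\{\ell_p(X)\}_{p\in(0,\infty]}$ is nested: $\ell_p(X)\subset \ell_q(X)$ whenever $p\le q$. Hence $\bigcup_{p\in\Gamma}\ell_p(X)$ coincides with $\bigcup_{p\in\Gamma}\ell_p(X)$ where the directedness of the index set under the ordering of $(0,\infty]$ makes the union a directed union of subspaces, and a directed union of subspaces is itself a subspace. I would make this explicit: given $x,y$ in the union, choose $p_1,p_2\in\Gamma$ with $x\in\ell_{p_1}(X)$, $y\in\ell_{p_2}(X)$, and take $p=\max\{p_1,p_2\}$ (or a common upper bound in $\Gamma$) so that $x,y\in\ell_p(X)$, whence $x+\lambda y\in\ell_p(X)$; intersecting with $E$ preserves this, so $F$ is a subspace. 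I would also note that $F$ is proper exactly when $A\neq\varnothing$, which is the nonempty case we treat.

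Next I would compute $\mathrm{codim}(F) = \dim(E/F)$ and show it equals $\mathfrak{c}$. The upper bound is immediate: invariant sequence spaces embed into $X^{\mathbb{N}}$, so $\dim(E)\le \mathrm{card}(X^{\mathbb{N}}) = \mathfrak{c}^{\aleph_0} = \mathfrak{c}$ using $\mathrm{card}(X)=\mathfrak{c}$, and therefore $\mathrm{codim}(F)\le \mathfrak{c}$. For the lower bound I would produce $\mathfrak{c}$ many elements of $A$ that remain linearly independent modulo $F$; the zero-free-version machinery of the invariant sequence space together with the hypothesis $A\neq\varnothing$ lets me start from a single witness $w\in A$ and spread it out across disjoint blocks of coordinates to build a $\mathfrak{c}$-sized independent family in $E/F$. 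Combining the two bounds gives $\mathrm{codim}(F)=\mathfrak{c}$.

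With $F$ a proper subspace of $E$ and $\mathrm{codim}(F)=\mathfrak{c}$, Theorem \ref{T1} applies verbatim and yields that $E\smallsetminus F = A$ is $(\alpha,\mathfrak{c})$-lineable for all $\alpha<\mathfrak{c}$, completing the proof. The main obstacle, as indicated, is the codimension computation: the embedding bound is routine, but constructing $\mathfrak{c}$ many vectors of $A$ that are independent modulo $F$ is where the structural hypotheses on the invariant sequence space (the zero-free version property (a) and the coordinate-domination property (b)) must be used carefully, since one needs the constructed combinations to genuinely escape \emph{every} $\ell_p(X)$ with $p\in\Gamma$ while staying inside $E$.
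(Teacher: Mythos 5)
Your overall reduction is the same as the paper's: both arguments come down to showing that $F:=E\cap\bigcup_{p\in\Gamma}\ell_p(X)$ is a proper subspace of $E$ whose codimension in $E$ equals $\mathfrak{c}$, and then invoking Theorem \ref{T1} (the paper packages this step as Corollary \ref{S1}, whose proof is exactly the codimension count you describe). Your verification that $F$ is a subspace via the nestedness $\ell_p(X)\subset\ell_q(X)$ for $p\le q$ is correct, and is in fact a point the paper leaves implicit. The upper bound $\mathrm{codim}(F)\le\dim(E)\le\mathrm{card}(X^{\mathbb{N}})=\mathfrak{c}^{\aleph_0}=\mathfrak{c}$ also matches the paper.

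The gap is in the lower bound. Producing $\mathfrak{c}$ elements of $E\smallsetminus F$ that are linearly independent modulo $F$ is precisely the assertion that $E\smallsetminus F$ is $\mathfrak{c}$-lineable, and this is the only genuinely hard part of the proposition; your proposal gestures at it without carrying it out. The construction you sketch --- ``spreading a witness $w$ over disjoint blocks of coordinates'' --- yields only countably many vectors. Upgrading $\aleph_0$ to $\mathfrak{c}$ requires either passing to the closed span and using that an infinite-dimensional closed subspace of a (quasi-)Banach space has dimension at least $\mathfrak{c}$ (which amounts to re-deriving the spaceability statement), or a more delicate construction with $\mathfrak{c}$ almost disjoint supports, in which case one must check both that the resulting combinations stay in $E$ (via property (a) of invariant sequence spaces) and that every nontrivial combination escapes every $\ell_p(X)$ with $p\in\Gamma$. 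The paper sidesteps all of this by citing Theorem \ref{T7}: the set, if nonempty, is spaceable in $E$, hence $\mathfrak{c}$-lineable, hence $\mathrm{codim}(F)\ge\mathfrak{c}$. Replacing your sketched construction with that citation turns your argument into a complete proof essentially identical to the paper's.
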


\begin{proof}
If $$E\smallsetminus\bigcup_{p\in\Gamma}\ell_p(X)$$ is non-empty, by Theorem \ref{T7} we obtain its $\mathfrak{c}$-lineability. Since we have $\dim (E)\geq\mathfrak{c}$ and $$\dim \left(\bigcup_{p\in\Gamma}\ell_p(X)\right)\geq\dim (\ell_p(X) )\geq\mathfrak{c},$$ it follows that
\[
\dim \left(\bigcup_{p\in\Gamma}\ell_p(X)\right)=\mathrm{card}\left(\bigcup_{p\in\Gamma}\ell_p(X)\right) \ \ \ \text{and} \ \ \ \dim (E)=\mathrm{card}(E).
\]
On the other hand, since $$\bigcup_{p\in\Gamma}\ell_p(X), E\subset X^{\mathbb{N}}$$ and $\mathrm{card}(X^{\mathbb{N} })=\mathfrak{c}^{\aleph_0}=\mathfrak{c}$, we conclude that
\[
\dim \left(\bigcup_{p\in\Gamma}\ell_p(X)\right)\leq\mathrm{card}(X^{\mathbb{N}})=\mathfrak{c} \ \ \text{and} \ \ \dim (E)\leq\mathrm{card}(X^{\mathbb{N}})=\mathfrak{c}.
\]
Therefore $$\dim \left(\bigcup_{p\in\Gamma}\ell_p(X)\right)=\dim (E)=\mathfrak{c},$$ and thus we are in the conditions of Corollary \ref{S1}, which gives us, for all $\alpha<\mathfrak{c}$, the $(\alpha,\mathfrak{c})$-lineability of $$E\smallsetminus\bigcup_{p\in\Gamma}\ell_p(X).$$
\end{proof}

\begin{corollary}
The set $$c_0\smallsetminus\bigcup_{p>0}\ell_p$$ is $(\alpha,\mathfrak{c})$-lineable for all $\alpha<\mathfrak{c}$
\end{corollary}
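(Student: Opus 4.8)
The plan is to obtain this as a direct instance of Proposition \ref{T80} by making the right choice of the Banach space $X$, the index set $\Gamma$, and the invariant sequence space $E$. First I would set $X=\mathbb{K}$, so that $\ell_p(X)=\ell_p$, and take $\Gamma=(0,\infty)$; since $\mathrm{card}(\mathbb{K})=\mathfrak{c}$, the cardinality hypothesis of Proposition \ref{T80} is satisfied. With $E=c_0$, the set appearing in Proposition \ref{T80} becomes exactly $c_0\smallsetminus\bigcup_{p>0}\ell_p$.

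The next step is to verify that $c_0$ is an invariant sequence space over $\mathbb{K}$ in the sense of the definition preceding Theorem \ref{T7}. Condition (b) is immediate, since $|x_j|\leq\sup_k|x_k|=\|x\|_\infty$ for every $j$. For condition (a), I would observe that deleting the zero terms of a sequence changes neither its convergence to $0$ nor its supremum norm; hence $x\in c_0$ if and only if $x^0\in c_0$, and in that case $\|x\|_\infty=\|x^0\|_\infty$, so the constant $K=1$ works.

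It then remains to rule out the trivial (empty) case of Proposition \ref{T80} by showing $c_0\smallsetminus\bigcup_{p>0}\ell_p\neq\varnothing$. Here I would exhibit an explicit witness, for instance $x=\left(\tfrac{1}{\ln(n+1)}\right)_{n\geq1}$. This sequence tends to $0$, so $x\in c_0$; on the other hand, for every fixed $p>0$ one has $(\ln(n+1))^p\leq n$ for all large $n$, whence $\tfrac{1}{(\ln(n+1))^p}\geq\tfrac{1}{n}$ eventually and the series $\sum_n\tfrac{1}{(\ln(n+1))^p}$ diverges by comparison with the harmonic series. Thus $x\notin\ell_p$ for any $p>0$, which gives the non-emptiness.

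With these verifications in place, Proposition \ref{T80} applies and yields the $(\alpha,\mathfrak{c})$-lineability of $c_0\smallsetminus\bigcup_{p>0}\ell_p$ for all $\alpha<\mathfrak{c}$. The only genuinely substantive step is the non-emptiness check; the invariant-sequence-space verification is routine, and everything else amounts to matching the hypotheses of Proposition \ref{T80}.
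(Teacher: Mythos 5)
Your proposal is correct and follows essentially the same route as the paper: reduce to Proposition \ref{T80} (with $X=\mathbb{K}$, $E=c_0$, $\Gamma=(0,\infty)$) and then exhibit a logarithmic witness sequence, shown to lie outside every $\ell_p$ by comparison with the harmonic series. The only cosmetic differences are that you verify explicitly that $c_0$ is an invariant sequence space (the paper takes this for granted) and that you treat all $p>0$ directly where the paper checks only $p\in\mathbb{N}$ and uses the monotonicity of the $\ell_p$ scale.
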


\begin{proof}
To prove the result, it suffices to verify that $$c_0\smallsetminus\bigcup_{p>0}\ell_p\neq\emptyset,$$ because with this fact the Proposition \ref{T80} assures us of its $(\alpha,\mathfrak{c })$-lineability.

Let $(x_k)_{k\in\mathbb{N}}$ be given by
\[
x_1=1 \ \ \ \text{and} \ \ \ x_k=\frac{1}{\log k}, \ k>1.
\]
It is obvious that $(x_k)_ {k\in\mathbb{N}}\in c_0$. Since $\sum_{i=1}^{\infty}\frac{1}{i}=\infty$, to verify that $(x_k)_{k\in\mathbb{N}}\not \in\ell_p$, $p\in\mathbb{N}$, it suffices to show that $$\frac{1}{(\log k)^p}\geq\frac{1}{k}$$ for $k$ large enough. In fact, observe that
\begin{eqnarray*}
\lim_{k\rightarrow\infty}\frac{(\log k)^p}{k}=\lim_{k\rightarrow\infty}\frac{p(\log k)^{p-1}}{k}=\cdots=\lim_{k\rightarrow\infty}\frac{p!\log k}{k}=\lim_{k\rightarrow\infty}\frac{p!}{k}=0
\end{eqnarray*}
and thus we have $\frac{(\log k)^p}{k}\leq 1$ for $k$ large enough.

Since $p\in\mathbb{N}$ was arbitrary, it follows that $$(x_k)_{k\in\mathbb{N}}\in c_0\smallsetminus\bigcup_{p\in\mathbb{N}}\ell_p=c_0\smallsetminus\bigcup_{p>0}\ell_p.$$
\end{proof}

\end{document}